\documentclass[a4paper,11pt]{article}
\usepackage{amsfonts}
\usepackage[T1]{fontenc}
\usepackage{microtype}
\usepackage{makeidx}
\usepackage{setspace}
\usepackage{authblk}
\usepackage{epigraph}
\usepackage{graphicx}
\usepackage{xcolor}
\usepackage[all]{xy}
\usepackage{amsmath}
\usepackage{amssymb}
\usepackage{booktabs}
\usepackage{braket}
\usepackage{array}
\usepackage{tabularx}
\usepackage{multirow}
\usepackage{indentfirst}
\usepackage{cite}
\usepackage{stmaryrd}
\usepackage{faktor}
\usepackage{titling}
\usepackage{tikz}
\usepackage{dsfont}
\usepackage{url}
\usepackage{hyperref}
\usepackage{tikz-cd}
\usepackage{tkz-graph}
\usepackage{caption}

\usetikzlibrary{matrix,arrows,decorations.pathmorphing}
    \renewcommand{\leq}{\leqslant}
    \renewcommand{\geq}{\geqslant}
\usepackage{amsthm}
\theoremstyle{plain}
\newtheorem{thm}{Theorem}[section]
\newtheorem{dfn}[thm]{Definition}

\newtheorem{prop}[thm]{Proposition}

\newtheorem{que}[thm]{Question}

\theoremstyle{definition}
\newtheorem{ex}[thm]{Example}

\theoremstyle{remark}
\newtheorem{oss}[thm]{Remark}

\DeclareMathOperator{\U}{\mathrm{U}}

\DeclareMathOperator{\id}{\mathrm{Id}}

\DeclareMathOperator{\aut}{\mathrm{Aut}}
\DeclareMathOperator{\imm}{\mathrm{Im}}

\DeclareMathOperator{\Pal}{\mathrm{Pal}}

\DeclareMathOperator{\N}{\mathbb{N}}
\DeclareMathOperator{\Z}{\mathbb{Z}}

\DeclareMathOperator{\sort}{\mathrm{sorted}}

\title{A weak Lehmer code for type $F_4$}


\author{Paolo Sentinelli\thanks{ Dipartimento di Matematica, Politecnico di Milano, Milan, Italy. \\ \href{mailto:paolosentinelli@gmail.com}{paolo.sentinelli@protonmail.com}} \ and Andrea Zatti}
\date{}

\begin{document}

\maketitle

\begin{abstract}
We provide an algorithm to construct a multicomplex for any lower Bruhat interval of $F_4$, such that its rank--generating function equals that of the Bruhat interval. For Weyl groups, it is always possible to find such a multicomplex thanks to the work of Bj\"{o}rner and Ekedahl. The algorithm is based on only two functions, which weaken the notion of Lehmer code for finite Coxeter groups, motivated by the fact that a strong Lehmer code for type $F_4$ does not exist. We also realize the set of palindromic Poincaré polynomials of $F_4$ as an induced subposet of the Bruhat order that forms a lattice.   
 \end{abstract}

\section{Introduction}

In the last section of the paper \cite{Billey1}, the authors glimpse the possibility that any rank--symmetric lower Bruhat interval in a Weyl group admits a product of chains, of the same cardinality as a subposet. It is certainly true that there exists a bijective and rank--preserving function between any rank--symmetric lower Bruhat interval in a Weyl group and a product of chains; in fact it is known that the rank--generating function of such an interval is a product of $q$-analogues (see e.g. \cite{Carrell}). Sara Billey has proved in \cite{billei} that, for types $A_n$ and $B_n$, there exists a bijective function from any rank--symmetric lower Bruhat interval to a product of chains, whose inverse is order--preserving. The same result can be verified computationally in type $H_3$. This property does not necessarily hold for all Weyl groups. In particular, in the language of \cite{BS}, it is proved computationally in \cite{bishop} and in the present paper, with other techniques, that a Coxeter system $(W,S)$ of type $F_4$ does not admit a Lehmer code, i.e. there does not exist any order--preserving bijective function from $[2]\times [6] \times [8] \times [12]$ to $W$. 

A.  Bj\"{o}rner and T. Ekedahl showed in \cite[Theorem E]{BE} that the ranks of a lower Bruhat interval of a Weyl group form an $M$-sequence. This result implies that for any lower Bruhat interval of a Weyl group there exists a multicomplex whose $f$-polynomial equals the rank--generating function of the interval. For types $A_n$, $B_n$ and $D_n$, the Lehmer codes constructed in \cite{BS}  provide, for any lower Bruhat interval, an explicit multicomplex with this property. This is also obtained for a Coxeter group of type $H_3$, a case not contemplated by \cite[Theorem E]{BE}. In Section \ref{sezione weak}, we weaken the notion of Lehmer code to provide an algorithm which realizes, for any lower Bruhat interval in type $F_4$, a multicomplex with the same rank--generating function. We also introduce the notion of principal and unimodal elements, extending those of \cite[Section 6]{BS}, to present as a lattice the set of palindromic rank--generating functions of lower Bruhat intervals in type $F_4$ (see Figures \ref{Hasse0} and \ref{Hasse1}), i.e. the set of Poincaré polynomials for the cohomology of rationally smooth Schubert varieties  in type $F_4$. This presentation extends to type $F_4$ the results of \cite[Th. 7.11 and Ex. 6.8]{BS}, obtained for types $A_n$ and $H_3$, linking, by isomorphism, a natural order on the set of palindromic Poincaré polynomials with the Bruhat order on unimodal elements, which turns out to be a lattice (see Figure \ref{Hasse1}).

\section{Notation and preliminaries}

We refer to \cite[Section 2]{BS} for most notation and preliminaries. We only recall some of the fundamental ones. For $n\in \N$ we let $[n]:=\{1,2,\ldots,n\}$ and $[n]_0:=\{0,1,2,\ldots,n\}$. The $q$-\emph{analogue} of $n$ is the polynomial $[n]_q:=\sum_{i=0}^{n-1}q^i$. For an element $x\in \N^k$, we let $\sort(x) \in \N^k$ be the tuple $x$ increasingly ordered. The set $\N^k$ is considered to be ordered componentwise. 
Let $(P,\leq)$ be a poset. A  subset $J \subseteq P$ is an \emph{order ideal} if $x \in J$ and $y\leq x$ imply $y\in J$.

\begin{dfn} Let $k\in \N$. An order ideal of $\N^k$ is called \emph{multicomplex}.
\end{dfn}
Notice that an order ideal of $[1]_0^k$ is a simplicial complex. The \emph{$f$-vector} of a multicomplex $J$ is the sequence $(f_0,f_1,\ldots)$, where $f_i:=|\{x \in J: \rho(x)=i\}|$, for all $i\in \N$, where $\rho$ is the rank function. The \emph{$f$-polynomial} of a finite multicomplex $J$ is the rank--generating function $\sum_{x\in J}q^{\rho(x)}$.  

\begin{dfn} \label{def msequenza}
    A sequence $g$ of natural numbers is called \emph{M}-sequence if there exists a multicomplex whose $f$-vector is $g$.
\end{dfn}
We recall now some facts on Coxeter groups. Let $(W,S)$ be a Coxeter system, $e\in W$ the identity, $\ell : W\rightarrow \N \cup\{0\}$ the length function and $J \subseteq S$. Then for every $w\in W$ there exists a unique factorization \begin{equation} \label{fattorizzazione}
    w=w_Jw^J
\end{equation} such that $\ell(w)=\ell(w_J)+\ell(w^J)$ where $w_J$ is an element of the subgroup generated by $J$, and $\ell(sw^J)>\ell(w^J)$, for all $s\in J$. 
 We denote by $\leq$ the Bruhat order on $W$ and with the same notation the componentwise order on $\N^k$.
The group $W$ with the Bruhat order is graded with rank function $\ell$ and the polynomial $h_w:=\sum\limits_{v\leq w}q^{\ell(w)}$ is usually called the Poincaré polynomial of the interval $[e,w]$. For some special elements $w\in W$, such polynomials factorize nicely. For example, if $|W|<\infty$, $k:=|S|$ and $w_0$ is the element of maximal length, then $$h_{w_0}=\sum\limits_{w \leq w_0}q^{\ell(w)}=\prod\limits_{i=1}^k[e_i+1]_q.$$ 
The natural numbers $1=e_1\leq \ldots \leq e_k$ are called the \emph{exponents} of $(W,S)$. The exponents for type $F_4$ are $1,5,7,11$.
It is known that in a Weyl group, the Schubert variety corresponding to an element $w$ is rationally smooth if and only if the polynomial $h_w$ is palindromic (see \cite[Theorem E]{Carrell}).  

We end this section by recalling the definition of Lehmer code for a finite Coxeter system, as appears in \cite[Definition 4.2]{BS}.
\begin{dfn} \label{def codici di L}
    Let $(W,S)$ be a finite Coxeter system with exponents $e_1,\ldots,e_n$. A bijective function $L: W \rightarrow \prod_{i=1}^n\{0,\ldots,e_i\}$ is a \emph{Lehmer code} if
    $$L^{-1}: \prod\limits_{i=1}^n\{0,\ldots,e_i\}\rightarrow (W,\leq)$$
    is order preserving.
\end{dfn}
Notice that the function $L$ of Definition \ref{def codici di L} is rank--preserving. In this paper we refer to a Lehmer code as a \emph{strong Lehmer code}. 
We extend the notion of Lehmer code to infinite Coxeter systems of finite rank. \begin{dfn} \label{def codici di L 2}
    Let $(W,S)$ be an infinite Coxeter system such that $k:=|S|<\infty $. A rank--preserving injective function $L: W \rightarrow \N^k$ is a \emph{Lehmer code} if $\imm(L)$ is an order ideal of $\N^k$ and
    $L^{-1}: \imm(L) \rightarrow (W,\leq)$
    is order preserving.
\end{dfn}

\begin{ex}
Let $(W,S)$ be a Coxeter system of type $\tilde{A}_1$, where $S=\{s,t\}$. If $L : W \rightarrow \N^2$ is the function defined by $L(w)=(\ell(w_{\{s\}}),\ell(w^{\{s\}}))$, for all $w\in W$, then $L$ is a Lehmer code for $(W,S)$.
\end{ex}

Definition \ref{def codici di L 2} includes also cases of the following type.

\begin{ex} \label{esempio S3}
    Let $(W,S)$ of type $A_2$, $S=\{s,t\}$ and $X\subseteq \N^2$ the order ideal with maxima $\{(1,1),(3,0)\}$. Define a function $L: W \rightarrow X$ by setting $L(e)=(0,0)$, $L(s)=(1,0)$, $L(st)=(2,0)$, $L(sts)=(3,0)$, $L(t)=(0,1)$ and $L(ts)=(1,1)$. Then $L$ is a Lehmer code according to Definition \ref{def codici di L 2}.  
\end{ex}

\begin{figure}[htbp]
    \centering
    \begin{tikzpicture}[scale=1.5, every node/.style={inner sep=2pt}]
        
        \begin{scope}[xshift=0cm]
            \node (00) at (0,0) {$(0,0)$};
            \node (10) at (1,1) {$(1,0)$};
            
            \node (01) at (-1, 1) {$(0,1)$};
            \node (11) at (-1, 2) {$(1,1)$};
            
            \node (20) at (1, 2) {$(2,0)$};
            \node (30) at (0, 3) {$(3,0)$};
            
            \draw (00) -- (01);
            \draw (00) -- (10);
            \draw (01) -- (11);
            \draw (10) -- (11);
            \draw (10) -- (20);
            \draw (20) -- (30);
            
            \node at (0, -0.8) {Order ideal $X$};
        \end{scope}

        \begin{scope}[xshift=5cm]
            \node (e)   at (0,0) {$e$};
            \node (t)   at (-1,1) {$t$};
            \node (s)   at (1,1) {$s$};
            \node (ts)  at (-1,2) {$ts$};
            \node (st)  at (1,2) {$st$};
            \node (sts) at (0,3) {$sts$};
            
            \draw (e) -- (t);
            \draw (e) -- (s);
            \draw (t) -- (ts);
            \draw (t) -- (st);
            \draw (s) -- (ts);
            \draw (s) -- (st);
            \draw (ts) -- (sts);
            \draw (st) -- (sts);
            
            \node at (0, -0.8) {Bruhat order of $S_3$};
        \end{scope}
        
    \end{tikzpicture}
    \caption{Hasse diagrams of the posets of Example \ref{esempio S3}.}
\end{figure}

\section{Non--existence of a Lehmer code for $F_4$}

In this section we show that a Coxeter system $(W,S)$ of type $F_4$ does not admit a Lehmer code by using the function \texttt{subgraph\_search\_iterator()} of SageMath. 
This result has already been stated in \cite{bishop}. We only add the fact that any bijective function $$L: \{x\in [1]_0\times [5]_0\times [7]_0\times [11]_0 : \rho(x)\leq 6\} \rightarrow \{w\in W : \ell(w)\leq 6\}$$ whose inverse is order--preserving cannot be extended to a bijective function $$\hat{L}: \{x\in [1]_0\times [5]_0\times [7]_0\times [11]_0 : \rho(x)\leq 7\} \rightarrow \{w\in W : \ell(w)\leq 7\}$$ whose inverse is order--preserving.

$\,$

 Let $C:=[1]_0\times [5]_0 \times [7]_0 \times [11]_0$. Then, if $\rho(x):=\sum_{i=1}^4x_i$, we have that
$$\sum\limits_{x \in C}q^{\rho(x)}=\sum\limits_{w \in F_4}q^{\ell(w)}=[2]_q[6]_q[8]_q[12]_q.$$
For $k\in \N$, let us define the sets 
\begin{eqnarray*}
    C(k)&:=&\{x \in C : \rho(x)\leq  k\}, \\
    F_4(k)&:=&\{w \in F_4: \ell(w)\leq k\}.
\end{eqnarray*}
Clearly $\sum_{x \in C(k)}q^{\rho(x)}=\sum_{w \in F_4(k)}q^{\ell(w)}$, for all $k\in \N$.

$\,$

Using the \texttt{subgraph\_search\_iterator()} function, we find $264$ distinct injective poset morphisms $L^{-1}:C(6) \rightarrow F_4(6)$. This computation requires about $18$ hours to be performed on our domestic devices.

$\,$

Having constructed a list $E$ of bijections $L: F_4(6) \rightarrow C(6)$ whose inverse functions are order preserving, 
we consider the set $Z$ of coatoms of the order ideal of $C$ with maximum $x$, for every element $x$ such that $\rho(x)=7$.

Since we want to construct an injective poset morphism $C \hookrightarrow W$, every cover relation in $C$ has to correspond to a cover relation in $W$. For every set $z \in Z$, the join of its elements lies in $\{x\in C: \rho(x)=7\}$. Then an embedding $f: C(6) \rightarrow F_4(6)$ can be extended to an embedding $f: C(7) \rightarrow F_4(7)$ only if there exists an element $w \in F_4(7)$ such that $f(c) \leq w$ for all $c \in z$, for every $z\in Z$. If such an element does not exist for some $z\in Z$, for every $f \in \mathrm{E}$, then there is no Lehmer code for $F_4$. This is in fact the case.

Hence, we have proved computationally the following theorem.

\begin{thm} Let $(W,S)$ be a Coxeter system of type $F_4$. Then there does not exist any
    bijective function $$\hat{L}: \{x\in [1]_0\times [5]_0\times [7]_0\times [11]_0 : \rho(x)\leq 7\} \rightarrow \{w\in W : \ell(w)\leq 7\}$$ whose inverse is order--preserving.
\end{thm}

The following question remains open.

\begin{que}
    Does $F_4$ admit a Lehmer code in the sense of Definition \ref{def codici di L 2}?
\end{que}

\section{Weak Lehmer codes} \label{sezione weak}

In this section we introduce the notion of \emph{weak Lehmer code} for a Coxeter group, extending the one of Lehmer code of a finite Coxeter group introduced in \cite{BS}. In the previous section we have shown that a Coxeter group of type $F_4$ does not admit a Lehmer code. As proved in \cite{BS}, a Lehmer code provides uniformly, for every lower Bruhat interval, a multicomplex whose $f$-polynomial equals the rank--generating function of the interval. By \cite[Theorem E]{BE} it is known that the ranks of a lower Bruhat interval in type $F_4$ form $M$-sequences, i.e. $f$-vectors of multicomplexes. By constructing an explicit weak Lehmer code, we obtain an algorithm to realize a multicomplex whose $f$-vector is such an $M$-sequence, for every lower Bruhat interval in type $F_4$.    

\vspace{0.3em}


\begin{dfn} \label{definizione weak}
    Let $(W,S)$ be a Coxeter system such that $k:=|S|<\infty$. We say that a finite set $L^W:=\{L_1,L_2,\ldots,L_h\}$ is a \emph{weak Lehmer code} for $(W,S)$ if:
    \begin{enumerate}
        \item[1)] $L_i: W \rightarrow \N^k$ is a rank--preserving injective function, for all $i \in [h]$;
        \item[2)] if $|\max\{L_i(v): v\leq w\}|=1$ then $\{L_i(v): v\leq w\}$ is a multicomplex, for every $w\in W$, $i\in [h]$;
        \item[3)] for every element $w\in W$ there exists $i\in [h]$ and $\phi \in \aut(W,\leq)$ such that the set $\{(L_i\circ \phi)(v): v\leq w\}$ is a multicomplex.
    \end{enumerate}
\end{dfn} 

By definition, if a Coxeter system admits a weak Lehmer code, then the sequence of ranks of any lower Bruhat interval is an $M$-sequence.

\vspace{0.3em}

\begin{ex}
 If $L$ is a Lehmer code, as in Definitions \ref{def codici di L} and \ref{def codici di L 2}, then $\{L\}$ is a weak Lehmer code. For a Lehmer code $L$, the set $\{L(v): v\leq w\}$ is a multicomplex, for every $w\in W$. See the proof of \cite[Proposition 4.3]{BS}.    
\end{ex}

\begin{ex}
 By the argument in the proof of \cite[Proposition 6.1]{BE}, a Coxeter system of type $\widetilde{C}_2$ does not admit any weak Lehmer code.
\end{ex}

\begin{ex}
Let $(W,S)$ be a universal Coxeter system with set of generators $S=\{r,s,t\}$; the group is defined by the relations $r^2=s^2=t^2=e$. By \cite[Proposition 2.2.9]{BB} there exists and element $w \in W$ such that $v<w$ for all $v \in W$ with $\ell(v)=3$. Hence the Poincaré polynomial, modulo $q^4$, of $[e,w]$ is $1+3q+6q^2+12q^3$. One can check, using the numerical characterization of $M$-sequences (see e.g. \cite[Chapter II]{StaCCA}) or directly by Definition \ref{def msequenza}, that $(1,3,6,12)$ is not an $M$-sequence. This implies that a universal Coxeter system $(W,S)$ with $|S|\geq 3$ does not admit any weak Lehmer code.
\end{ex}


The automorphism group $\aut(F_4,\leq)$ is isomorphic (see \cite[Corollary 2.3.6]{BB}) to the abelian group $\Z_2 \times \Z_2$ and its elements are $\{\id, \iota , \psi , \iota \circ \psi\}$, where:
\begin{itemize}
    \item $\iota: F_4 \rightarrow F_4$ is defined by the assignment $w \mapsto w^{-1}$, and 
    \item $\psi: F_4 \rightarrow F_4$ is defined by the assignment $$s_{i_1}s_{i_2}\cdots s_{i_k} \mapsto s_{\sigma(i_1)}s_{\sigma(i_2)}\cdots s_{\sigma(i_k)},$$ \noindent where $\sigma(i)=4-i+1$, for all $i\in [4]$. 
\end{itemize} 

\begin{figure}  
\begin{center}\begin{tikzcd}
s_1 \arrow[r, dash] & s_2 \arrow[r, dash] {r}{4} & s_3 \arrow[r, dash] &  s_4 
\end{tikzcd}\caption{Coxeter graph of $F_4$.}  \label{Coxeter graph} \end{center} 
\end{figure}
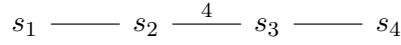

We now construct a weak Lehmer code for type $F_4$. Consider the following saturated chains of the Bruhat order:
   \begin{eqnarray*} X_1:= e \vartriangleleft s_2 \end{eqnarray*}
 \begin{eqnarray*}
     X_2:=  e \vartriangleleft s_3 \vartriangleleft s_3s_2 \vartriangleleft s_3s_2s_3
 \end{eqnarray*}
 \begin{eqnarray*}
X_3:= e \vartriangleleft s_1 \vartriangleleft s_2s_1 \vartriangleleft s_3s_2s_1 \vartriangleleft s_2s_3s_2s_1 \vartriangleleft s_1s_2s_3s_2s_1
 \end{eqnarray*}
\begin{eqnarray*}
        Y_1:= e &\vartriangleleft& s_4 \vartriangleleft s_4s_3 \vartriangleleft s_4s_3s_2 \vartriangleleft s_4s_3s_2s_1 \vartriangleleft s_4s_3s_2s_3s_1 \\ &\vartriangleleft& s_4s_ 3s_ 2s_ 3s_ 1s_ 2 \vartriangleleft
    	s_4s_ 3s_ 2s_ 3s_ 1s_ 2s_ 3 \vartriangleleft s_4s_ 3s_ 2s_ 3s_ 1s_ 2s_ 3s_ 4 \\ &\vartriangleleft& s_4s_ 3s_ 2s_ 3s_ 1s_ 2s_ 3s_ 4s_ 3 \vartriangleleft
    	s_4s_ 3s_ 2s_ 3s_ 1s_ 2s_ 3s_ 4s_ 3s_ 2 \\ &\vartriangleleft& s_4s_ 3s_ 2s_ 3s_ 1s_ 2s_ 3s_ 4s_ 3s_ 2s_ 3 \vartriangleleft s_4s_ 3s_ 2s_ 3s_ 1s_ 2s_ 3s_ 4s_ 3s_ 2s_ 3s_ 1 \\ &\vartriangleleft&
    	s_4s_ 3s_ 2s_ 3s_ 1s_ 2s_ 3s_ 4s_ 3s_ 2s_ 3s_ 1s_ 2 \vartriangleleft
    	s_4s_ 3s_ 2s_ 3s_ 1s_ 2s_ 3s_ 4s_ 3s_ 2s_ 3s_ 1s_ 2s_ 3 \\ &\vartriangleleft & s_4s_ 3s_ 2s_ 3s_ 1s_ 2s_ 3s_ 4s_ 3s_ 2s_ 3s_ 1s_ 2s_ 3s_ 4
    \end{eqnarray*}
 \begin{eqnarray*}
        Y_2:= s_4s_3s_2s_3 &\vartriangleleft& s_4s_3s_2s_3s_4 \vartriangleleft s_4s_3s_2s_3s_4s_1  
        \vartriangleleft s_4s_3s_2s_3s_4s_1s_2 \\ &\vartriangleleft& s_4s_3s_2s_3s_4 s_1s_2s_3 \vartriangleleft s_4s_3s_2s_3s_4s_1s_2s_3s_2 \\&\vartriangleleft&
         s_4s_3s_2s_3s_4s_1s_2s_3s_2s_1 \vartriangleleft s_4s_3s_2s_3s_1s_2s_3s_4s_3s_2s_1
    \end{eqnarray*}
    
$\,$

We have that $X_3X_1X_2= (F_4)_{\{s_1,s_2, s_3\}}$ is a parabolic subgroup and a Coxeter system of type $B_3$, and ${^{\{s_1,s_2, s_3\}}F_4}=Y_1 \uplus Y_2$. Then $$F_4=X_3X_1X_2Y_1 \, \uplus \, X_3X_1X_2Y_2.$$

\vspace{0.3em}

We define a function
$L_1: F_4 \rightarrow \N^4$ by setting
$$
 L_1(x_3x_1x_2u) =  \left\{
  \begin{array}{ll}
    (\ell(x_1),\ell(x_3),\ell(x_2),\ell(u)), & \hbox{if $u\in Y_1$;} \\
   (\ell(x_1),\ell(x_3),\ell(u),\ell(x_2)), & \hbox{if $u\in Y_2$,}
  \end{array}
\right.$$  for all $x_1\in X_1$, $x_2 \in X_2$, $x_3 \in X_3$, and $u\in Y_1 \cup Y_2$. Notice that
\begin{eqnarray*}
    \imm( L_1)&=& [1]_0\times [5]_0 \times [3]_0 \times [15]_0 \, \cup \, [1]_0\times [5]_0 \times [11]_0 \times [3]_0
    \\ &\subseteq& [1]_0\times [5]_0 \times [11]_0 \times [15]_0.
\end{eqnarray*}
We define a second function
$L_2: F_4 \rightarrow \N^4$ by setting
$$
 L_2(x_3x_1x_2u) =  \left\{
  \begin{array}{ll}
    (\ell(x_1),\ell(x_3),\ell(x_2),\ell(u)), & \hbox{if $u\in Y_1$;} \\
   (\ell(x_1),\ell(x_3),\ell(x_2)+4,\ell(u)-4), & \hbox{if $u\in Y_2$,}
  \end{array}
\right.,$$  for all $x_1\in X_1$, $x_2 \in X_2$, $x_3 \in X_3$, and $u\in Y_1 \cup Y_2$. 
The image of the function $L_2$ is 
\begin{eqnarray*}
    \imm( L_2)&=& [1]_0\times [5]_0 \times [3]_0 \times [15]_0 \, \cup \, [1]_0\times [5]_0 \times [7]_0 \times [7]_0
    \\ &\subseteq& [1]_0\times [5]_0 \times [11]_0 \times [15]_0.
\end{eqnarray*}

\begin{thm}
    The set $\{L_1,L_2\}$ is a weak Lehmer code for type $F_4$.
\end{thm}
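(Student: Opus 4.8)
The plan is to verify the three conditions of Definition \ref{definizione weak} in turn, using the decomposition $F_4 = X_3X_1X_2Y_1 \uplus X_3X_1X_2Y_2$ as the organizing principle. Write $J:=\{s_1,s_2,s_3\}$, so that every $w\in F_4$ has a unique length--additive parabolic factorization $w=w_Jw^J$ with $w_J\in (F_4)_J$ and $w^J=u\in {}^{J}F_4=Y_1\uplus Y_2$, and $w_J=x_3x_1x_2$ is the unique length--additive factorization of the type--$B_3$ element $w_J$ through the chains $X_3,X_1,X_2$. The existence and uniqueness of this last factorization is, up to a permutation of coordinates, the assertion that $x_3x_1x_2\mapsto (\ell(x_1),\ell(x_3),\ell(x_2))$ is a Lehmer code for $(F_4)_J\cong B_3$, which one has for type $B$; I would record it as the structural input coming from the previous literature.

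First I would dispatch condition $1)$. Rank--preservation of $L_1$ and $L_2$ is immediate: in every branch the four recorded integers sum to $\ell(x_1)+\ell(x_3)+\ell(x_2)+\ell(u)=\ell(w)$, the added and subtracted $4$ in the $Y_2$--branch of $L_2$ cancelling. Injectivity follows because the third coordinate already separates the two branches---for $L_1$ it lies in $\{0,1,2,3\}$ when $u\in Y_1$ and in $\{4,\dots,11\}$ when $u\in Y_2$, and for $L_2$ in $\{0,1,2,3\}$ versus $\{4,5,6,7\}$---after which the remaining coordinates recover each chain element from its length.

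For conditions $2)$ and $3)$ I would first record two simplifications. Each $\imm(L_i)$ is a union of two coordinate boxes based at the origin, hence itself an order ideal of $\N^4$; thus $L_i([e,w])\subseteq \imm(L_i)$ and the whole question is whether $L_i([e,w])$ is \emph{down--closed}. Moreover, since the elements of $\aut(F_4,\leq)$ are order automorphisms, $(L_i\circ\phi)([e,w])=L_i([e,\phi(w)])$, so condition $3)$ reduces to the finite statement that on each $\aut(F_4,\leq)$--orbit some representative $w'$ and some index $i$ make $L_i([e,w'])$ an order ideal. The backbone of the verification is the case $u=e$: there $w\in (F_4)_J$, the interval $[e,w]_{F_4}$ coincides with $[e,w]_{B_3}$, and $L_1=L_2$ sends it to an order ideal by the $B_3$ Lehmer code, the fourth coordinate being constantly $0$. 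For general $w$ I would pass to the parabolic fibres: projecting $[e,w]$ by $v\mapsto v^J$ lands in the lower set of $w^J$ inside ${}^{J}F_4$, and each fibre $\{v\leq w:\ v^J=u\}$ is a lower interval of $(F_4)_J$ whose $L_i$--image is the corresponding $B_3$--order ideal placed at height $\ell(u)$ in the coordinate reserved for $u$. Down--closedness then splits into an intra--fibre part, handled by the $B_3$ Lehmer code, and an inter--fibre part, governed by the cover relations of the $24$--element poset ${}^{J}F_4$.

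The inter--fibre part is where the difficulty concentrates, and it is what forces two codes rather than one. The poset ${}^{J}F_4$ is not the disjoint union of the chains $Y_1$ and $Y_2$: there are cross relations, for instance the $Y_2$--element $s_4s_3s_2s_3$ is covered in ${}^{J}F_4$ by the $Y_1$--element $s_4s_3s_2s_3s_1$. Since $L_i$ records a $Y_1$--type and a $Y_2$--type element in different coordinates, such a cross relation can destroy down--closedness for one code while the other, which cuts $Y_2$ along a different axis, restores it; the automorphisms $\iota$ and $\psi$ supply the remaining slack. I would therefore close the argument by a finite analysis keyed to $w^J$: for condition $2)$, showing that a single maximal element of $L_i([e,w])$ occurs precisely when the $Y_1$-- and $Y_2$--encoded fibres do not interleave, in which case the image is a union of coordinate boxes based at the origin and hence an order ideal; and for condition $3)$, exhibiting for each orbit a pair $(i,\phi)$ that produces such a single maximum, so that $2)$ applies. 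The heart of the matter, and the main obstacle, is this bookkeeping of the inter--fibre relations of ${}^{J}F_4$ against the two encodings and the four automorphisms; it is a finite check, carried out most transparently by computer over the $\aut(F_4,\leq)$--orbits in the spirit of the previous section, and it is exactly where the nonexistence of a genuine Lehmer code for $F_4$ is sidestepped.
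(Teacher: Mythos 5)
Your proposal is correct and follows essentially the same route as the paper: the paper's proof is exactly the finite computational verification you defer to, checking conditions 2) and 3) exhaustively over $F_4$ (invoking the four automorphisms only for condition 3)), while condition 1) is handled as you describe, via the length--additive factorization through $X_3,X_1,X_2$ and $Y_1\uplus Y_2$ and the separation of the two branches by the third coordinate. The one caution is that your intermediate claim that each fibre $\{v\leq w : v^J=u\}$ projects to a lower Bruhat interval of $(F_4)_J$ is neither proved nor actually needed, since the load--bearing step in both your plan and the paper is the exhaustive machine check rather than this fibrewise reduction.
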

\begin{proof}
The fact that $L_1$ and $L_2$ are rank--preserving follows by the factorization \eqref{fattorizzazione} (see also \cite[Section 3.4]{BB}). We can check computationally conditions $2)$ and $3)$ of Definition \ref{definizione weak}.
\end{proof}

We have now an algorithm to associate, in a global way, to any lower Bruhat interval of $F_4$ an order ideal of $\N^4$, i.e. a multicomplex, whose rank--generating function equals the one of the interval. 
Most of the multicomplexes associated with lower Bruhat intervals in $F_4$ are obtained by using the function $L_1$ composed with the automorphisms of the Bruhat order. 

By the general construction explained in \cite[Section 2]{BS}, we have an algorithm that associates to any lower Bruhat interval of $F_4$ a balanced vertex--decomposable simplicial complex whose $h$-polynomial equals the rank--generating function of the interval.

\begin{ex} Let $w:=s_4s_3s_1s_2s_3s_4s_2s_1 \in F_4$. The element $w$ is a fixed point of $\aut(F_4,\leq)$. We have that $L_1(w)=(1,1,6,0)$ and a multicomplex whose rank--generating function equals the one of the Bruhat interval $[e,w]$ is the order ideal $\{L_1(v):v\leq w\}$ with maxima
\begin{eqnarray*}
    \max\{L_1(v):v\leq w\} &=& \{(0,5,1,1),(1,1,0,5),(1,1,1,4), \\ && (1,1,3,1), (1,1,6,0),(1,2,1,2),(1,3,1,1)\}.
\end{eqnarray*}
The Lehmer complex of $[e,w]$ is a simplicial complex having $|[e,w]|=100$ facets.
\end{ex}

As in \cite[Section 6]{BS}, we define the set of $L$-principal and $L$-unimodal elements, for a rank--preserving injective function $L: W \rightarrow \N^k$.

\begin{dfn} \label{def principali} Let $(W,S)$ be a Coxeter system of finite rank  $k:=|S|$ with a weak Lehmer code $L^W$, and let $L\in L^W$. An element $w\in W$ is \emph{$L$-principal} if $|\max\{L(v):v\leq w\}|=1$. 
\end{dfn} We denote by $\Pr(L)$ the set of $L$-principal elements in $W$. For $w\in W$, we have defined $h_w:=\sum_{v\leq w}q^{\ell(v)}$. It is clear from Definitions \ref{def codici di L} and \ref{def principali} that 
\begin{eqnarray*}
    w\in \Pr(L) \, &\Leftrightarrow&  \,  \{L(v): v\leq w\} =  \{x \in \N^k : x\leq L(w)\}\\ &\Rightarrow & \,  h_w=\prod_{i=1}^k[L(w)_i+1]_q \,  .
\end{eqnarray*}

\begin{prop} \label{prop reticolo}
    Let $L^W$ be a weak Lehmer code for $(W,S)$ and $L\in L^W$. Then
    $(\Pr(L), \leq)$ is a meet-semilattice and, as posets, 
    $$(\Pr(L), \leq) \simeq (\{L(w): w\in \Pr(L)\},\leq).$$
\end{prop}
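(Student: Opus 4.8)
The plan is to establish the order isomorphism first and then read off the meet-semilattice structure from it. Throughout I would write $I(L):=\{L(w):w\in \Pr(L)\}$ and, for $a,b\in \N^4$, let $a\wedge b$ denote the componentwise minimum. The one fact I would use repeatedly is the equivalence recorded just before the statement: $w\in \Pr(L)$ if and only if $\{L(v):v\leq w\}=\{x\in \N^k:x\leq L(w)\}$; that is, a principal element is exactly one for which $L$ maps $[e,w]$ bijectively onto the full box below $L(w)$. (This equivalence is where condition $2)$ of Definition \ref{definizione weak} enters.)

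For the isomorphism, since $L$ is injective its restriction to $\Pr(L)$ is a bijection onto $I(L)$, so it remains to check that it is order preserving and order reflecting. First I would show that $w\leq w'$ forces $L(w)\leq L(w')$ when $w'$ is principal: indeed $L(w)\in\{L(v):v\leq w'\}$, which equals the box below $L(w')$. Conversely, if $L(w)\leq L(w')$ with $w'$ principal, then $L(w)$ lies in that same box, so $L(w)=L(v)$ for some $v\leq w'$, and injectivity forces $v=w$, whence $w\leq w'$. Both implications use only that the larger element is principal together with injectivity, so they hold for every comparison inside $\Pr(L)$ and yield the claimed poset isomorphism.

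For the meet-semilattice claim I would fix $w,w'\in \Pr(L)$, set $m:=L(w)\wedge L(w')$, and produce a principal $z$ with $L(z)=m$. Since $m\leq L(w)$ and $w$ is principal, $m=L(z_1)$ for a unique $z_1\leq w$; likewise $m=L(z_2)$ for a unique $z_2\leq w'$; injectivity gives $z_1=z_2=:z$, a common lower bound of $w$ and $w'$. The heart of the matter is that $z$ is itself principal. Every $v\leq z$ satisfies $v\leq w$ and $v\leq w'$, hence $L(v)\leq L(w)$ and $L(v)\leq L(w')$, so $L(v)\leq m$; thus $\{L(v):v\leq z\}\subseteq\{x:x\leq m\}$ and contains $m=L(z)$, so $m$ is the \emph{maximum} of this set and a fortiori its unique maximal element. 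Condition $2)$ of Definition \ref{definizione weak} then makes $\{L(v):v\leq z\}$ an order ideal, and being downward closed and containing $m$ it must equal the whole box below $m$; hence $z\in \Pr(L)$.

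Finally, the isomorphism lets me identify $z$ as the meet: it is a principal lower bound of $w$ and $w'$, and if $y\in \Pr(L)$ satisfies $y\leq w$ and $y\leq w'$, then $L(y)\leq L(w)\wedge L(w')=m=L(z)$, so $y\leq z$ by order reflection. I expect the main obstacle to be exactly the principality of $z$: a priori the image of the lower interval of an arbitrary element need not be a box, and indeed the full Bruhat order is not a meet-semilattice, so the argument must exploit that $m$ is forced to be the genuine maximum of $\{L(v):v\leq z\}$ in order to trigger condition $2)$. Everything else is bookkeeping with injectivity and the box description of principal intervals.
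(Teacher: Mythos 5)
Your proof is correct and takes essentially the same route as the paper's: pull back the componentwise minimum $m=L(w)\wedge L(w')$ to a common lower bound $z$, observe that $\{L(v):v\leq z\}$ has $m$ as its maximum so that $z$ is principal, and obtain the isomorphism from injectivity together with the box description of principal lower intervals. If anything you are slightly more complete than the paper, which leaves implicit the final check that $z$ is the \emph{greatest} lower bound in $\Pr(L)$ rather than merely a principal lower bound (and note the statement is for general $k=|S|$, so your $\N^4$ should read $\N^k$).
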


\begin{proof} Let $L\in L^W$ and $a,b\in  \Pr(L)$. Consider the element $m := a \wedge b$ in $\N^k$, i.e. 
$m=(\min\{a_1,b_1\},\ldots,\min\{a_k,b_k\})$; then $L^{-1}(m)\leq a$ and $L^{-1}(m)\leq a$ because $a$ and $b$ are $L$-principals. Let $v\in W$ be such that $v\leq L^{-1}(m)$; then $v\leq a$ and $v\leq b$, so $L(v) \leq L(a)$ and $L(v) \leq L(b)$, i.e. $L(v)\leq m$ and $L^{-1}(m)\in \Pr(L)$.

Since $L$ is a bijection, we have that its restriction to $\Pr(L)$ is a bijection. If $u\leq v$ in $\Pr(L)$, then $L(u)\leq L(v)$, by Definition \ref{def principali}. If $x\leq y$ in $\{L(w): w\in \Pr(L)\}$ then $L^{-1}(x)\leq L^{-1}(y)$ by condition $2)$ of Definition \ref{definizione weak}.
\end{proof}




For $w \in \Pr(L)$, define the orbit $O_w:=\{v\in \Pr(L): h_v=h_w\}$. 
\begin{dfn} \label{def unimodali}
   Let $(W,S)$ be a Coxeter system of finite rank with a weak Lehmer code $L^W$, and let $L\in L^W$.  An element $w\in W$ is \emph{$L$-unimodal} if $L(w)=\min_{\leq_{lex}}\{L(v):v \in O_w\}$.
\end{dfn}
We denote by $\U(L)$ the set of $L$-unimodal elements. The term \emph{unimodal} comes from the type $A$ case and the Lehmer code $L_n$ considered in \cite{BS}; in fact, in this case, $U(L_n)$ is the set of unimodal permutations in the symmetric group $S_n$, see \cite[Section 7]{BS}. By Proposition \ref{prop reticolo}, because $(\U(L), \leq)$ is an induced subposet of $(\Pr(L),\leq)$, we decuce the following result.
\begin{prop} \label{prop unimodali} Let $(W,S)$ be a Coxeter system of finite rank with a weak Lehmer code $L^W$, and let $L\in L^W$. Then
    the Bruhat order on $\U(L)$ is isomorphic to the componentwise order on $\{L(w):w \in \U(L)\}$.
\end{prop}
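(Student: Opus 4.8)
The plan is to obtain the statement as an immediate consequence of Proposition~\ref{prop reticolo}, using the elementary fact that the restriction of an order isomorphism to an induced subposet is again an order isomorphism onto its image.

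First I would check the inclusion $\U(L)\subseteq\Pr(L)$. By Definition~\ref{def unimodali}, an $L$-unimodal element $w$ lies in its own orbit $O_w$, and $O_w\subseteq\Pr(L)$ by the very definition of the orbit as a subset of the $L$-principal elements; hence $w\in\Pr(L)$. Consequently the Bruhat order on $\U(L)$ is the induced subposet of $(\Pr(L),\leq)$ on the vertex set $\U(L)$ — this is exactly the remark preceding the statement — and, likewise, the componentwise order on $\{L(w):w\in\U(L)\}$ is the induced subposet of $(\{L(w):w\in\Pr(L)\},\leq)$.

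Next I would invoke Proposition~\ref{prop reticolo}, which provides an order isomorphism $L|_{\Pr(L)}\colon (\Pr(L),\leq)\to(\{L(w):w\in\Pr(L)\},\leq)$. I would then apply the general principle that, if $\phi\colon P\to Q$ is an order isomorphism and $A\subseteq P$, then $\phi|_A\colon A\to\phi(A)$ is an order isomorphism between the induced subposets: bijectivity onto $\phi(A)$ is clear, and for $a,a'\in A$ one has $a\leq a'$ in the induced order on $A$ if and only if $a\leq a'$ in $P$, if and only if $\phi(a)\leq\phi(a')$ in $Q$, if and only if $\phi(a)\leq\phi(a')$ in the induced order on $\phi(A)$. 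Taking $P=\Pr(L)$, $Q=\{L(w):w\in\Pr(L)\}$, $\phi=L|_{\Pr(L)}$ and $A=\U(L)$ yields an order isomorphism $(\U(L),\leq)\xrightarrow{\sim}(\{L(w):w\in\U(L)\},\leq)$, which is precisely the claim.

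There is no genuine obstacle here: the statement is a direct corollary, and the only verifications required are the set-theoretic inclusion $\U(L)\subseteq\Pr(L)$ and the preservation of the \emph{induced} order under restriction, both routine once Proposition~\ref{prop reticolo} is available. I would therefore keep the argument short and present it essentially as the two observations above.
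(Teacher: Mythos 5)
Your argument is correct and is essentially the paper's own: the authors likewise deduce the statement by observing that $(\U(L),\leq)$ is an induced subposet of $(\Pr(L),\leq)$ and restricting the order isomorphism of Proposition~\ref{prop reticolo}. Your write-up merely makes explicit the routine facts (the inclusion $\U(L)\subseteq\Pr(L)$ via the definition of $O_w$, and that restricting an order isomorphism to an induced subposet yields an order isomorphism onto its image) that the paper leaves implicit.
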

Let us define $$\Pal(F_4):=\left\{h_w: w\in F_4, \, h_w=q^{\ell(w)}h_w(q^{-1})\right\}.$$ In other words, $\Pal(F_4)$ is the set of Poincaré polynomials for the cohomology of rationally smooth Schubert varieties in type $F_4$. By Definition \ref{def unimodali}, the assignment $w\mapsto \prod_{i=1}^k [L_1(w)_i+1]_q$ defines an injective function $\U(L_1) \rightarrow \Pal(F_4)$.
In the following proposition we obtain the analogue of \cite[Th. 7.11 and Ex. 6.8]{BS} for type $F_4$. 
\begin{prop}
The sets $\Pr(L_1) \cup \{w_0\}$ and $\U(L_1) \cup \{w_0\}$ with the induced Bruhat order are lattices and 
\begin{eqnarray*}
   \Pal(F_4) &=&\{h_w: w \in \U(L_1)\cup \{w_0\} \} \\
            &=& \left\{\prod_{i=1}^4[L_1(w)_i+1]_q: w \in \U(L_1) \right\}  \cup \, \left\{[2]_q[6]_q[8]_q[12]_q\right\}.
 \end{eqnarray*}
\end{prop}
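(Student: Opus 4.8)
The plan is to treat the two lattice claims and the two displayed equalities separately, reducing each to facts already established for a general weak Lehmer code and, where that is not possible, to a finite verification in $F_4$.

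First I would show that $\Pr(L_1)\cup\{w_0\}$ is a lattice. By Proposition \ref{prop reticolo}, $(\Pr(L_1),\leq)$ is a meet-semilattice, and since $w_0$ is the maximum of the entire Bruhat order it is in particular above every $L_1$-principal element, hence the maximum of $\Pr(L_1)\cup\{w_0\}$. Adjoining a top element to a finite meet-semilattice leaves it a meet-semilattice (meets among the old elements are unchanged, and the meet of $w_0$ with any $x$ is $x$), and a finite meet-semilattice with a maximum is automatically a lattice: the join of $a,b$ is the meet, taken inside the poset, of their nonempty finite set of common upper bounds. This settles the first lattice.

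Next I would prove the two equalities for $\Pal(F_4)$. The inclusion $\supseteq$ is immediate from the boxed implication following Definition \ref{def principali}: if $w$ is $L_1$-principal then $h_w=\prod_{i=1}^4[L_1(w)_i+1]_q$ is a product of $q$-analogues, hence palindromic, so $h_w\in\Pal(F_4)$; and $h_{w_0}=[2]_q[6]_q[8]_q[12]_q$ is palindromic as well. The reverse inclusion $\subseteq$ is the crux: one must show that every $w\in F_4$ with palindromic $h_w$ — equivalently, every rationally smooth element, by \cite[Theorem E]{Carrell} — satisfies $h_w=h_v$ either for some $v\in\Pr(L_1)$ or for $v=w_0$. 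I would verify this by enumerating the rationally smooth elements of $F_4$ and checking, for each, that its Poincaré polynomial coincides with $\prod_{i=1}^4[L_1(v)_i+1]_q$ for some principal $v$ or equals $h_{w_0}$; this is the main obstacle and is carried out computationally, as in the rest of the section. With this in hand the passage to unimodal elements is formal: by Definition \ref{def unimodali} the set $\U(L_1)$ contains exactly one element of each orbit $O_w$, and $h_v=h_w$ for all $v\in O_w$, so $\{h_w:w\in\U(L_1)\}=\{h_w:w\in\Pr(L_1)\}$. Combining this with the two inclusions gives $\Pal(F_4)=\{h_w:w\in\U(L_1)\cup\{w_0\}\}$, and the second displayed equality is then just the boxed implication applied to each unimodal (hence principal) $w$, together with $h_{w_0}=[2]_q[6]_q[8]_q[12]_q$.

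Finally, for the lattice structure on $\U(L_1)\cup\{w_0\}$ I would invoke Proposition \ref{prop unimodali}, which identifies $(\U(L_1),\leq)$ with $(\{L_1(w):w\in\U(L_1)\},\leq)$ under the componentwise order. It then suffices to check that this finite subset of $\N^4$ is closed under componentwise meet; granting this, it is a meet-semilattice, and adjoining the top $w_0$ yields a lattice exactly as in the first step. Unlike the meet-closedness of $\Pr(L_1)$, which is forced by the $L$-principal condition in the proof of Proposition \ref{prop reticolo}, meet-closedness of the unimodal image is not automatic — the lexicographically minimal representatives of two orbits need not have a lexicographically minimal componentwise meet — so I expect this, together with the $\subseteq$ inclusion above, to be the point requiring an explicit machine check in $F_4$; the resulting Hasse diagram is the lattice displayed in Figure \ref{Hasse1}.
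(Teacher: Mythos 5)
Your proposal is correct and follows essentially the same route as the paper: the first lattice claim via Proposition \ref{prop reticolo} plus the maximum $w_0$, the second lattice claim and the description of $\Pal(F_4)$ by finite machine verification in $F_4$. The only cosmetic difference is that the paper certifies the set equality by combining the already-noted injectivity of $w\mapsto\prod_{i=1}^4[L_1(w)_i+1]_q$ on $\U(L_1)$ with the computed count $|\Pal(F_4)|=|\U(L_1)|+1$, rather than matching each palindromic $h_w$ to a principal element one by one.
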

\begin{proof}
    The lattice property  of  $\Pr(L_1) \cup \{w_0\}$ follows by Proposition \ref{prop reticolo} and the existence of the maximum $w_0$.
We now construct the orbit set $\mathrm{O}$ of the elements $w\in \Pr(L_1)$.

Now, for each orbit, we choose a representative by taking the minimum in the lexicographic order.

We now check that the cardinality of $\Pal(F_4)$ is equal to $|\U(L_1)|+1$.

\end{proof}

\begin{oss}
    One can check that $|U(L_2)|<|\Pal(F_4)|-1$ and that the set $U(L_2)$ with the Bruhat order is not a lattice.
\end{oss}

\begin{ex}
    Let $c:=s_1s_2s_3s_4$ and $w:=cs_1$. We have that $L_1(w)=(1,2,1,1)$, 
    $h_w=[2]_q^3[3]_q$ and $w\in \Pr(L_1)$. Moreover $O_w=\{cs_1,cs_2,cs_3\}$ and $cs_3\in \U(L_1)$; in fact, $L_1(cs_2)=(1,1,2,1)$ and $L_1(cs_3)=(1,1,1,2)$.
\end{ex}

In Figure \ref{Hasse0} we depict the Hasse diagram of the sorted image,  under the function $L_1$,  of $\U(L_1)\cup\{w_0\}$, ordered componentwise. This diagram provides the complete list of palindromic Poincaré polynomials of rationally smooth Schubert varieties for type $F_4$. For any $w\in \U(L_1)$, a tuple $(x_1,x_2,x_3,x_4)=\sort(L_1(w))$ corresponds to the polynomial $h_w=\prod_{i=1}^4[x_i+1]_q$.
The lattice of Figure \ref{Hasse0} is a sublattice of $[1]_0\times [5]_0\times [7]_0 \times [11]_0$ and this explains its distributivity. 

In Figure \ref{Hasse1} we depict the Hasse diagram of $\U(L_1)\cup\{w_0\}$ with the induced Bruhat order. By Proposition \ref{prop unimodali}, such poset coincides with the image of $\U(L_1)\cup\{w_0\}$ under the function $L_1$, ordered componentwise. Notice that the lattice of Figure \ref{Hasse1} is not a sublattice of $[1]_0\times [5]_0\times [7]_0 \times [11]_0$; in fact
$(0,0,1,1) \wedge (1,2,2,0) = (0,0,0,0) \neq (0,0,1,0)$.

$\,$


{\bf Funding:}
The first author was supported by the grant PRIN
2022K48YYP Unirationality, Hilbert schemes, and singularities.

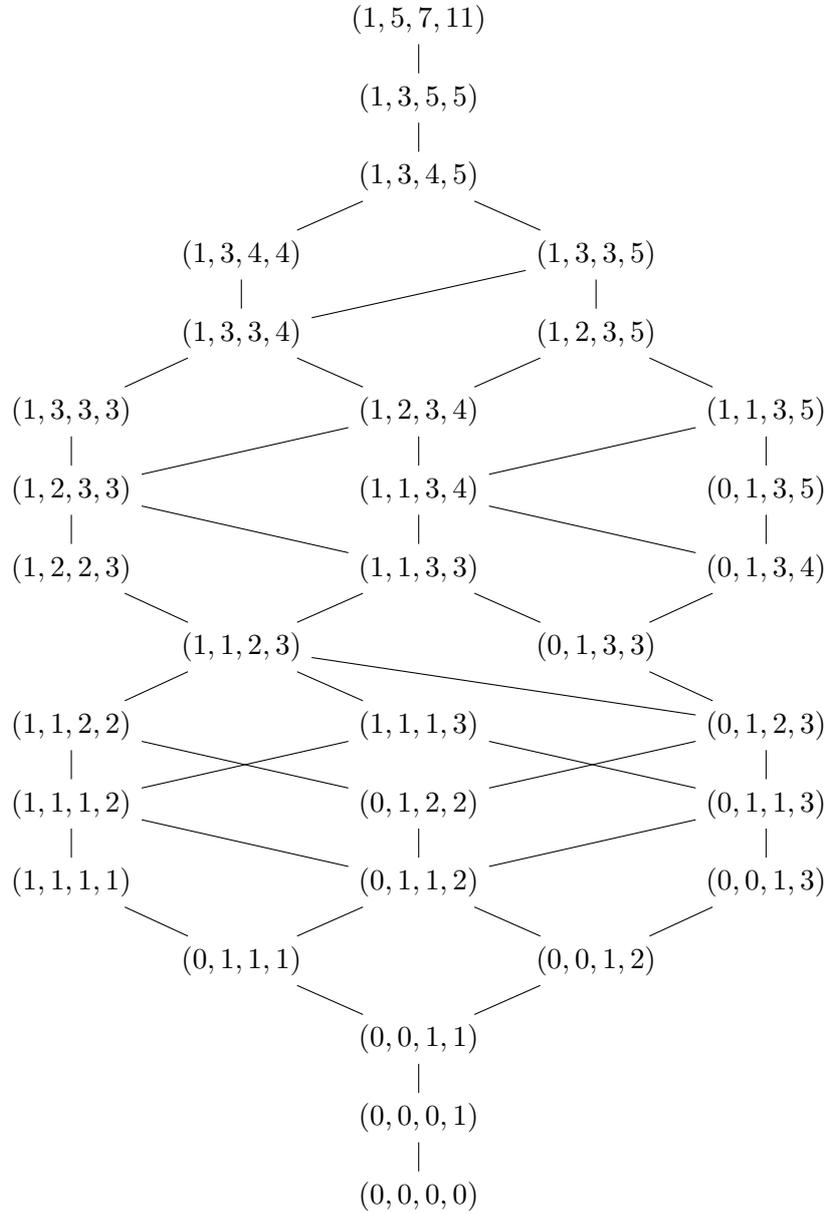
\begin{figure} 
\captionsetup{justification=centering,margin=2cm}
\begin{center}\begin{tikzpicture}
\matrix (a) [matrix of math nodes, column sep=0.4cm, row sep=0.4cm]{
            &           & (1,5,7,11)&           & \\
            &           & (1,3,5,5) &           & \\
            &           & (1,3,4,5) &           & \\
            & (1,3,4,4) &           & (1,3,3,5) & \\
            & (1,3,3,4) &           & (1,2,3,5) & \\
  (1,3,3,3) &           & (1,2,3,4) &           & (1,1,3,5)  \\
  (1,2,3,3) &           & (1,1,3,4) &           & (0,1,3,5)\\
  (1,2,2,3) &           & (1,1,3,3) &           & (0,1,3,4)\\
            & (1,1,2,3) &           & (0,1,3,3) & \\
  (1,1,2,2) &           & (1,1,1,3) &           & (0,1,2,3) \\
  (1,1,1,2) &           & (0,1,2,2) &           & (0,1,1,3) \\
  (1,1,1,1) &           & (0,1,1,2) &           & (0,0,1,3)\\
            & (0,1,1,1) &           & (0,0,1,2) & \\
            &           & (0,0,1,1) &           & \\
            &           & (0,0,0,1) &           & \\
            &           & (0,0,0,0) &           & \\};

\foreach \i/\j in {1-3/2-3, 2-3/3-3, 3-3/4-2, 3-3/4-4, 4-2/5-2, 4-4/5-2,
4-4/5-4, 5-2/6-1, 5-2/6-3, 5-4/6-3, 5-4/6-5, 6-1/7-1, 6-3/7-1, 6-3/7-3, 6-5/7-3, 6-5/7-5,
7-1/8-1, 7-1/8-3, 7-3/8-3, 7-3/8-5, 7-5/8-5, 8-1/9-2, 8-3/9-2, 8-3/9-4, 8-5/9-4,
9-2/10-1, 9-2/10-3, 9-2/10-5, 9-4/10-5, 10-1/11-1, 10-1/11-3, 10-3/11-1, 10-3/11-5, 
10-5/11-3, 10-5/11-5, 11-1/12-1, 11-1/12-3, 11-3/12-3, 11-5/12-3, 11-5/12-5,
12-1/13-2, 12-3/13-2, 12-3/13-4, 12-5/13-4, 13-2/14-3, 13-4/14-3, 14-3/15-3, 15-3/16-3}
    \draw (a-\i) -- (a-\j);
\end{tikzpicture} 
\caption{Hasse diagram of the distributive lattice \\ 
 $\{\sort(L_1(w)):w \in \U(L_1)\} \cup \{(1,5,7,11)\}$}  \label{Hasse0} \end{center} \end{figure}

\begin{figure} 
\captionsetup{justification=centering,margin=0cm}
\begin{center}
\begin{tikzpicture}
\matrix (a) [matrix of math nodes, column sep=0.4cm, row sep=0.4cm]{
           &           &           & (1,5,7,11)&           &            \\
           &           &           & (1,5,5,3) &           &            \\
           &           &           & (1,4,5,3) &           &            \\
           &           & (1,4,4,3) &           & (1,3,5,3) &            \\   
           &           & (1,3,4,3) &           & (1,2,5,3) &            \\    
           & (1,3,3,3) &           & (1,2,4,3) &           & (1,1,5,3)  \\   
           & (1,2,3,3) &           & (1,1,4,3) &           & (1,0,5,3)  \\ 
           & (1,2,3,2) &           & (1,1,3,3) &           & (1,0,4,3)  \\
           &           &           & (1,1,3,2) &           & (1,0,3,3)  \\ 
           & (1,2,1,2) &           & (1,1,3,1) &           & (1,0,3,2)  \\ 
           & (1,1,1,2) &           & (1,2,2,0) &           & (1,0,3,1)  \\ 
           & (0,1,1,2) & (1,1,1,1) &           &           & (1,0,3,0)  \\     
           & (0,0,1,2) & (0,1,1,1) &           &           &            \\    
           &           & (0,0,1,1) &           &           &            \\
           &           & (0,0,0,1) &           &           &            \\
           &           &           & (0,0,0,0) &           &            \\};

\foreach \i/\j in {1-4/2-4, 2-4/3-4, 3-4/4-3, 3-4/4-5, 4-3/5-3, 4-5/5-3, 4-5/5-5, 5-3/6-2, 5-3/6-4, 5-5/6-4, 5-5/6-6, 6-2/7-2, 6-4/7-2, 6-4/7-4, 6-6/7-4, 6-6/7-6, 7-2/8-2, 7-2/8-4, 7-4/8-4, 7-4/8-6, 7-6/8-6, 8-2/11-4, 8-2/10-2, 8-2/9-4, 8-4/9-4, 8-4/9-6, 8-6/9-6, 9-4/10-4, 9-4/11-2, 9-4/10-6, 9-6/10-6, 10-2/11-2, 10-4/12-3, 10-6/11-6, 11-4/16-4, 11-2/12-3, 11-2/12-2, 11-6/12-6, 12-3/13-3, 12-2/13-3, 12-2/13-2, 12-6/16-4, 13-3/14-3, 13-2/14-3, 14-3/15-3, 15-3/16-4} 
\draw (a-\i) -- (a-\j);
\end{tikzpicture} 
\end{center}
\caption{Hasse diagram of the lattice 
$\{L_1(w):w \in \U(L_1)\} \cup \{(1,5,7,11)\}$. It is also the Hasse diagram of the Bruhat order on $\U(L_1) \cup \{w_0\}$.}  \label{Hasse1} 
\end{figure}
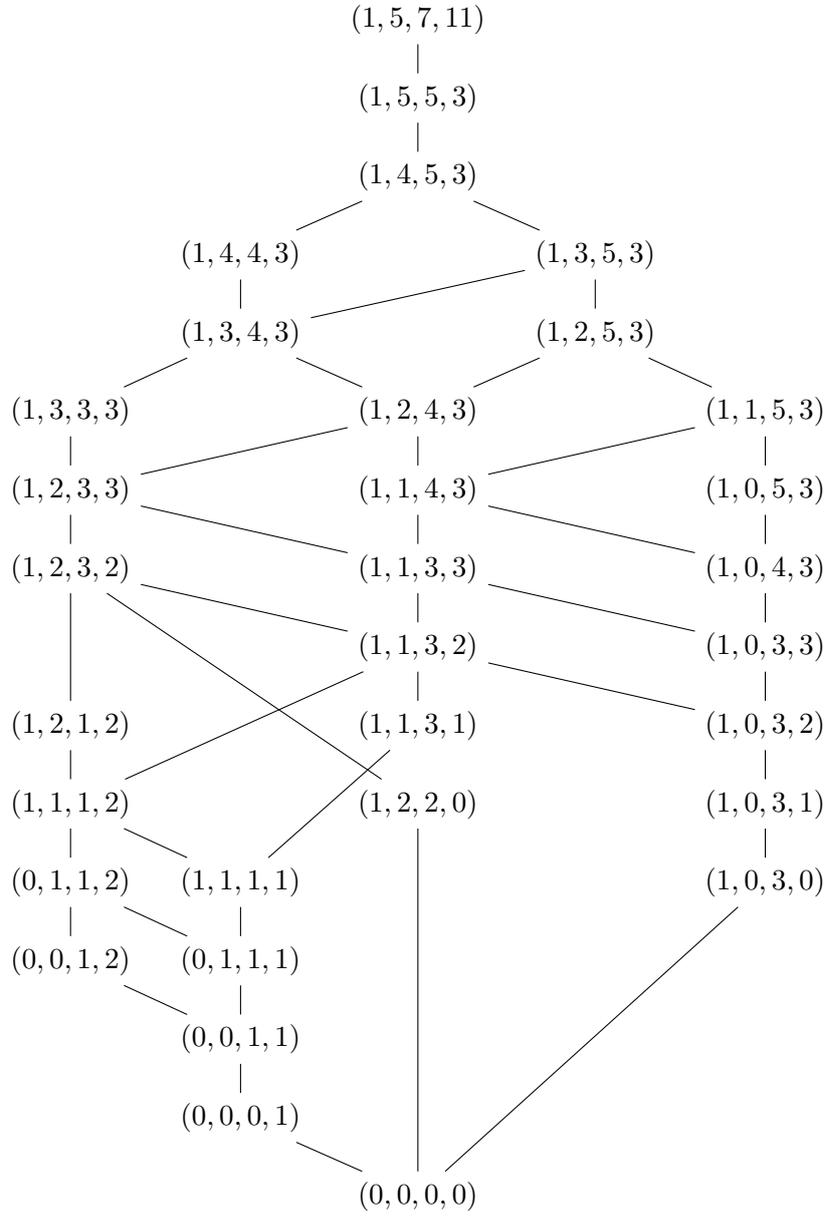   


\end{document}